\documentclass{amsart}
\usepackage{amscd,amssymb,hyperref}
\usepackage[all]{xy}
\topmargin-0.1truein \textwidth5.2 in \textheight8.61 in


\theoremstyle{plain}
\newtheorem{prop}[subsection]{Proposition}
\newtheorem{thm}[subsection]{Theorem}
\newtheorem{lem}[subsection]{Lemma}
\newtheorem{cor}[subsection]{Corollary}

\theoremstyle{remark}

\theoremstyle{definition}

\numberwithin{equation}{section}


\title{On ``small geodesics" and free loop spaces}

\author[A.~Bahri]{A.~Bahri}
\address{Department of Mathematics, Rutgers University,
New Brunswick, NJ}
\email{\href{mailto:abahri@math.rutgers.edu}{abahri@math.rutgers.edu}}
\thanks{}
\author[F. R.~Cohen]{F. R.~Cohen}
\address{Department of Mathematics, University of Rochester,
Rochester, NY 14627}
\email{\href{mailto:cohf@math.rochester.edu}{cohf@math.rochester.edu}}
\subjclass{Primary: 55R99, Secondary: 58D99}

\begin{document}
\begin{abstract}
A topological group is constructed which is homotopy equivalent to
the pointed loop space of a path-connected Riemannian manifold $M$
and which is given in terms of ``composable small geodesics" on $M$.
This model is analogous to J.~Milnor's free group construction
\cite{Milnor} which provides a model for the pointed loop space of a
connected simplicial complex. Related function spaces are
constructed from ``composable small geodesics" which provide models
for the free loop space of $M$ as well as the space of continuous
maps from a surface to $M$.
\end{abstract}
\maketitle

\section{Introduction}

The main purpose of this article is to describe a ``combinatorial
model" which is a topological group that is homotopy equivalent to
the pointed loop space of a Riemannian manifold $M$. A second
purpose is to give a model for the free loop space $\Lambda M$, the
space of all continuous maps from the circle to $M$. As a
consequence, a model for the space of continuous maps of a closed
orientable surface to $M$ is also given.

The pointed loop space $\Omega M$ is useful here. That is the
subspace of $\Lambda M$ given by functions which preserve a point,
namely $f(*) = *_M$ for fixed points $*$ in the circle, and $*_M$ in
$M$. The method used here is implicit in a method due to J.~Milnor
\cite{Milnor}. Thus this article is partially an exposition of those
methods, but in a somewhat different as well as extended context.
The main point is the construction of a topological group arising
from ``small geodesics" which has the homotopy type of $\Omega M$.

Combinatorial models are obtained by patching together ``composable
small geodesics" on $M$ as follows. The model is given by the space
of all ordered $n$-tuples of points where successive points are
required (1) to differ and (2) to satisfy the additional property
that there is an unique minimal geodesic between successive points.
With mild hypotheses, the space of all such $n$-tuples for $ n \geq
0$ are assembled into a topological group which is weakly homotopy
equivalent to the pointed loop space $\Omega M$. Forming the
homotopy orbit space for this group acting on itself by the adjoint
representation then gives a model for the free loop space $\Lambda
M$.

One further application is a combinatorial model for the space of
pointed maps as well as free maps of a closed, orientable Riemann
surface to $M$ which is given in terms of ``composable small
geodesics".

These models arise by introducing a natural monoid structure on
``composable minimal geodesics" as described explicitly below as
well as exploiting the structure of the radius of convexity for a
Riemannian manifold. One feature of one combinatorial model here is
that it is naturally a topological group while the multiplication in
the loop space $\Omega M$ is only associative up to homotopy.

Combinatorial models have been useful for $50$ years. Some examples
were first given in work of I.~M. James \cite {James}, and J.~Milnor
\cite{Milnor}. Later models for $\Omega^n\Sigma^n(X)$ were given by
J.~Milgram and J.~P.~May \cite{May,Milgram}. Combinatorial models
for the free loop space of a suspension are given in
\cite{Cfb,Ralph} while models for the space of continuous maps of
the $n$-sphere to an $(n+1)$-fold suspension are given in
\cite{bcp}.

Throughout this article $M$ is assumed to be a Riemannian manifold.
Some definitions as well as proofs are patterned quite closely on an
early result of Milnor \cite{Milnor} which gave combinatorial models
for loop spaces of connected simplicial complexes for which
modifications are made here for ``small geodesics". Related models
for $\Lambda M $ given by ``small geodesics" are also given in
\cite{Abbas}. This article assembles some of that structure in a
language familiar to topologists with the main modification required
here given by Lemma \ref{lem:convex balls}.

The authors thank Ryan Budney, and Mike Gage for explaining the
convexity radius of a Riemannian manifold. The first author is
grateful for the hospitality from the Institute for Advanced Study
during some of the work on this article. Much of this article dates
back to conversations of the authors in $2002$.

\section{ Constructions, and statement of results}

The purpose of this section is to assemble a space built out of
``small geodesics" as well as recording basic properties of these
constructions. Before giving these constructions, recall that a
Riemannian manifold $M$ admits a cover by geodesic balls (for
example page $1123$ section $360$C of \cite{EDM}). This feature is
used in the first definition as follows.

\begin{enumerate}
\item Let $$Z(M,k)$$ denote the subspace of $M^{k+1}$ given by
$(k+1)$-tuples $(x_k,x_{k-1}, \cdots , x_0)$ which satisfy the
property that there is an unique minimal geodesic from $x_i$ to
$x_{i+1}$ for all  $0 \leq i < k$.

\item Fix a point $v_0$ in $M$. Let $Z(M,k,v_0)$ denote the subspace of $Z(M,k)$
with $x_0 =  v_0$.

\item Let $X(M,k)$ denote the subspace of $Z(M,k)$ with $x_0 =
x_k$.

\item Let $G(M,k)$ is the subspace of $X(M,k)$ with $x_0 = x_k  = v_0$.
\end{enumerate}

Consider the disjoint union $\amalg_{k \geq 0}Z(M,k)$. Let
$$Z(M,\infty)$$ denote the identification space obtained from
the equivalence relation on $\amalg_{k \geq 0}Z(M,k)$ generated by
$$(x_k,x_{k-1}, \cdots, x_i, \cdots, x_0) \sim (x_k,x_{k-1}, \cdots,
\hat x_i, \cdots, x_0)$$ whenever $x_i = x_{i+1}$ or $x_{i+1} =
x_{i-1}$. Notice that this equivalence relation is exactly that
given in \cite{Milnor}, page $274$,  where a topological group is
constructed which is homotopy equivalent to the loop space of a
connected simplicial complex.

Let $$[x_k,x_{k-1}, \cdots , x_0] $$ denote the associated
equivalence class of $(x_k,x_{k-1}, \cdots , x_0)$ in $Z(M,\infty)$.
Similarly, let
\begin{enumerate}
\item $X(M,\infty)$ be the subspace of $Z(M,\infty)$ given by the
image of $\amalg_{k \geq 0}X(M,k)$ in $Z(M,\infty)$,
\item $Z(M,\infty,v_0)$ be the subspace of $Z(M,\infty)$ given by the
image of $\amalg_{k \geq 0}Z(M,k,v_0)$ in $Z(M,\infty)$, and
\item $G(M,\infty)$ be the subspace of $Z(M,\infty)$ given by the
image of $\amalg_{k \geq 0}G(M,k)$ in $Z(M,\infty)$.
\end{enumerate}

Next notice that the first coordinate projection map $\pi_k:Z(M,k)
\to\ M$ given by $\pi_k((x_k,x_{k-1}, \cdots, x_i, \cdots,
x_0))=x_k$ is continuous. Since the projection maps $\pi_k$ preserve
the equivalence relation for $\amalg_{k \geq 0}Z(M,k)$, there is an
induced continuous map $$\pi:Z(M,\infty) \to\ M.$$

Two useful, technical lemmas are stated next. Proofs are analogous
to that of Lemma \ref{lem:contractability} in section 4 and are
omitted.
\begin{lem}\label{lem:path-connectivity}
If $M$ is a path-connected Riemannian manifold, then
\begin{itemize}
  \item $Z(M,\infty,v_0)$,
  \item  $X(M,\infty)$, and
  \item
 $G(M,\infty)$
\end{itemize} are path-connected. Furthermore, $\pi$ is a surjection.
\end{lem}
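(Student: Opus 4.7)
The plan is to dispatch the surjectivity claim first and then to address the three path-connectivity assertions through a common sliding construction. Surjectivity of $\pi$ is immediate: for any $v \in M$, the length-one tuple $(v) \in Z(M,0)$ represents a class $[v] \in Z(M,\infty)$ with $\pi([v]) = v$.

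For path-connectivity my plan is a sliding argument patterned on \cite{Milnor}. Given a class $[x_k,\ldots,x_0]$, I would pick an index $i$ and slide $x_i$ along the unique minimal geodesic $\sigma$ from $x_i$ to an adjacent coordinate. Provided the uniqueness conditions are preserved along the way, the map $t \mapsto [x_k,\ldots,\sigma(t),\ldots,x_0]$ is a continuous path in $Z(M,\infty)$; at $t=1$ the equivalence relation removes the now-duplicated entry, decreasing the tuple length by one. Iterating this move reduces any class to one of minimal length from which the conclusion is direct.

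For $Z(M,\infty,v_0)$ this would be carried out by sliding the leading coordinate: move $x_k$ along its unique geodesic to $x_{k-1}$. Because $\sigma(t)$ lies on the original geodesic, the segment from $\sigma(t)$ to $x_{k-1}$ is a sub-segment and hence unique, so the path stays in $Z(M,k,v_0)$. Iterating $k$ times then yields a path from $[x_k,\ldots,v_0]$ to $[v_0]$. For $X(M,\infty)$ and $G(M,\infty)$ the first and last coordinates cannot be moved independently, so one must slide an interior coordinate. Here Lemma~\ref{lem:convex balls} would be the key ingredient: after passing to an equivalent representative whose consecutive coordinates all lie in a single strongly convex ball (arranged by auxiliary sliding moves in the style of Lemma~\ref{lem:contractability}), the uniqueness of the relevant minimal geodesics on both sides of the sliding coordinate becomes automatic, since within such a ball every pair of points is joined by a unique minimal geodesic. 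One then reduces the tuple length to zero, and for $X(M,\infty)$ concludes by a path in $M = X(M,0) \subset X(M,\infty)$ carrying the common endpoint value to $v_0$.

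The main obstacle will be preserving uniqueness when sliding an interior coordinate: moving $x_i$ toward $x_{i-1}$ preserves uniqueness on the $x_{i-1}$ side automatically, but not on the $x_{i+1}$ side. Lemma~\ref{lem:convex balls} is precisely the device introduced to circumvent this difficulty, and is the main ingredient distinguishing this proof from Milnor's purely simplicial version in \cite{Milnor}; past this step the remaining verifications are routine continuity and quotient-topology checks.
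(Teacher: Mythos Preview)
The paper does not actually give a proof of this lemma; it simply declares the argument ``analogous to that of Lemma~\ref{lem:contractability},'' which is exactly the sliding contraction you describe: move the leading coordinate along its minimal geodesic to the next one and iterate. So for $Z(M,\infty,v_0)$ and for the surjectivity of $\pi$ your proposal matches the paper's intended method and is correct.

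For $G(M,\infty)$ and $X(M,\infty)$, however, the obstacle you flag is not merely technical---it is fatal, and neither your convex-ball device nor the paper's ``analogous'' argument can repair it. By the paper's own Theorem~\ref{thm:bundle} the map $\Theta\colon G(M,\infty)\to\Omega(M)$ is a weak homotopy equivalence, so $\pi_0\bigl(G(M,\infty)\bigr)\cong\pi_1(M)$; similarly $\pi_0\bigl(X(M,\infty)\bigr)\cong\pi_0(\Lambda M)$ is the set of conjugacy classes in $\pi_1(M)$. Hence these two spaces are \emph{not} path-connected unless $M$ is simply-connected. Concretely, for $M=S^1$ the class $[1,i,-1,-i,1]\in G(S^1,\infty)$ is sent by $\Theta$ to a degree-one loop and so cannot lie in the path component of $[1]$; any attempt to slide an interior coordinate so as to collapse this tuple must at some stage force two consecutive entries to become antipodal, at which point the tuple leaves $Z(S^1,k)$. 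Lemma~\ref{lem:convex balls} supplies only \emph{local} uniqueness of minimal geodesics and cannot overcome this global obstruction. The lemma as stated is thus incorrect for $G(M,\infty)$ and $X(M,\infty)$ without a simple-connectivity hypothesis; fortunately the only place the paper actually cites Lemma~\ref{lem:path-connectivity} is to obtain surjectivity of $\pi$ on $Z(M,\infty,v_0)$ in the proof of Lemma~\ref{lem:principal G bundle}, so the misstatement does no damage to the main results.
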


In addition, there is a ``partial product" motivated by the
fundamental groupoid $$\mu:Z(M,j,v_0) \times G(M,k) \to\
Z(M,j+k,v_0)$$ given by the formula
$$\mu((x_j,\cdots, x_0),(y_{k},\cdots, y_0)) = (x_j,\cdots,
x_0,y_{k},\cdots, y_0)$$ as $x_0 = y_k = y_0 = v_0$. The map $\mu$
is continuous as it is induced by the inclusion of a subspace (
which has the subspace topology ).

\begin{lem} \label{lem:continuity}
There are continuous maps $\mu:Z(M,\infty,v_0) \times G(M,\infty)
\to\ Z(M,\infty,v_0)$ together with the following commutative
diagram.
\[
\begin{CD}
G(M,\infty) \times G(M,\infty) @>{\mu}>> G(M,\infty)\\
 @VVV           @VV{}V     \\
Z(M,\infty,v_0) \times G(M,\infty) @>{\mu}>> Z(M,\infty,v_0)
\end{CD}
\] In addition $G(M,\infty)$ is a topological group
with identity given by the equivalence class $[v_0]$, and
$G(M,\infty)$ acts on $Z(M,\infty,v_0)$ via the map $\mu$.
\end{lem}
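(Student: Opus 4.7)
The plan is to build $\mu$ first on each finite stratum, check compatibility with the defining equivalence relation, pass to the colimits by the universal property of the quotient, and finally verify the group axioms at the level of representative tuples.

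The partial product $Z(M,j,v_0) \times G(M,k) \to Z(M,j+k,v_0)$ given by concatenation (dropping one copy of the shared coordinate $x_0 = y_k = v_0$) is the restriction of a coordinate inclusion of subspaces of powers of $M$, hence continuous. To descend to the identification spaces I check that if two representatives differ by one of the defining moves (delete $x_i$ when $x_i = x_{i\pm 1}$ or when $x_{i+1} = x_{i-1}$), then so do their concatenations with any fixed partner, since the moves only involve three consecutive coordinates and concatenation only alters a single junction — where in fact both incoming coordinates equal $v_0$ and are already equivalent to one entry by the relation. Continuity of the induced $\mu$ on $Z(M,\infty,v_0)\times G(M,\infty)$ is then inherited from continuity on the disjoint union, exactly as in Milnor's argument in \cite{Milnor}. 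The commutative square in the statement is immediate from the fact that the upper $\mu$ is the restriction of the lower one to $G(M,\infty)\times G(M,\infty)\subset Z(M,\infty,v_0)\times G(M,\infty)$.

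For the group structure, associativity holds because concatenation is strictly associative on representatives. The class $[v_0]$ is a two-sided identity: multiplying by $[v_0]$ inserts an extra entry $v_0$ next to an existing $v_0$ at the junction, which is deleted by the equivalence relation. The inverse of $[x_k,x_{k-1},\ldots,x_0]$ is the reversed class $[x_0,x_1,\ldots,x_k]$; this reversed tuple lies in $G(M,k)$ because the map $\gamma(t)\mapsto \gamma(1-t)$ is a bijection between minimal geodesics from $x_i$ to $x_{i+1}$ and minimal geodesics from $x_{i+1}$ to $x_i$, and thus preserves the uniqueness condition. To see that this really is an inverse, expand
\[
[x_k,\ldots,x_1,x_0]\cdot[x_0,x_1,\ldots,x_k] = [x_k,\ldots,x_1,x_0,x_0,x_1,\ldots,x_k]
\]
and apply the relations inductively: collapse the adjacent pair $x_0,x_0$; the remaining $x_0$ now has equal neighbors $x_1,x_1$ and can be deleted; then the pair $x_1,x_1$ collapses; and so on, reducing the class all the way down to $[x_k]=[v_0]$. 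Continuity of inversion reduces to continuity of reversing coordinates on each $G(M,k)$, which is evident, and the reversal manifestly respects the defining relations.

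The action statement is then formal: $\mu$ is associative and unital as a map $Z(M,\infty,v_0)\times G(M,\infty)\to Z(M,\infty,v_0)$ by the same computations on representatives.

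The technical point I expect to be the main obstacle is continuity of $\mu$ on the product of quotient spaces, since in general a product of identification maps need not be an identification map. The resolution is standard in this context: the manifold $M$ is locally compact Hausdorff, each $Z(M,k)$ and $G(M,k)$ is a subspace of a finite power of $M$, and Milnor's argument in \cite{Milnor} applies verbatim to show that the induced $\mu$ on the identification spaces is continuous.
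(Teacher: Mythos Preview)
Your argument is correct and is precisely the content the paper gestures at but omits: the authors state this lemma without proof, remarking only that the finite-level $\mu$ is continuous because it is the inclusion of a subspace, and that the remaining verifications are analogous to those for Lemma~\ref{lem:contractability} and to Milnor's construction in \cite{Milnor}. Your passage to the quotient, your check of the group axioms (associativity, identity $[v_0]$, reversed-tuple inverse via iterated collapses), and your flagging of the product-of-quotients issue all fill in exactly these omitted details, and do so in the spirit the paper intends. One cosmetic point: with the paper's concatenation convention the shared $v_0$ is dropped, so the product $[x_k,\ldots,x_0]\cdot[x_0,\ldots,x_k]$ is represented by $(x_k,\ldots,x_1,x_0,x_1,\ldots,x_k)$ rather than with a doubled $x_0$; your inductive collapse works identically starting from the relation $x_{i+1}=x_{i-1}$ at the middle entry.
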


This action has further properties as proven below.
\begin{lem} \label{lem:universal.bundle}

The natural orbit space obtained from the right-action of
$$\mu:Z(M,\infty,v_0) \times G(M,\infty) \to\ Z(M,\infty,v_0)$$ gives
the projection $$p:Z(M,\infty,v_0) \to Z(M,\infty,v_0)/
G(M,\infty)$$ which is the projection in a fibre bundle.
\end{lem}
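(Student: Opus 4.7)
The plan is to adapt Milnor's argument from \cite{Milnor} to the Riemannian setting by producing, around each orbit, a continuous local section of $p$ whose existence rests on the convexity radius (Lemma \ref{lem:convex balls}).

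First, observe that $\pi:Z(M,\infty,v_0)\to M$ is invariant under the right $G(M,\infty)$-action, since $\mu$ only appends coordinates on the right and leaves the leading entry fixed. Consequently, for any open $U\subset M$ the preimage $\pi^{-1}(U)$ is $G(M,\infty)$-saturated, and its image $\bar U$ in $Z(M,\infty,v_0)/G(M,\infty)$ is open, with $p^{-1}(\bar U)=\pi^{-1}(U)$. Given an orbit $\bar\alpha$, fix a representative $\alpha=[x_k,x_{k-1},\ldots,x_1,v_0]$ and use Lemma \ref{lem:convex balls} to choose a convex geodesic ball $U\subset M$ about the point $\pi(\alpha)=x_k$.

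The heart of the construction is a continuous local section
\[
s:U\longrightarrow \pi^{-1}(U),\qquad s(q)=[q,x_k,x_{k-1},\ldots,x_1,v_0].
\]
That $s(q)$ truly lies in $Z(M,\infty,v_0)$ is the step where the Riemannian hypothesis is essential: the only new consecutive pair is $(q,x_k)$, and convexity of $U$ supplies a unique minimal geodesic between any two of its points. Continuity of $s$ comes from the evident continuous map $U\to Z(M,k+1,v_0)$ composed with the quotient, and $s(x_k)=\alpha$ after cancelling the consecutive repetition $(x_k,x_k)$. Using $s$, the plan is to define the candidate local trivialization
\[
\phi:p^{-1}(\bar U)\longrightarrow \bar U\times G(M,\infty),\qquad \phi(\beta)=\bigl(p(\beta),\gamma(\beta)\bigr),
\]
where, for $\beta=[y_m,\ldots,y_1,v_0]\in\pi^{-1}(U)$,
\[
\gamma(\beta)=[v_0,x_1,\ldots,x_{k-1},x_k,y_m,y_{m-1},\ldots,y_1,v_0].
\]
This lies in $G(M,\infty)$ because the only non-original consecutive pair $(x_k,y_m)$ sits inside the convex ball $U$. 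A sequence of Milnor cancellations---consecutive repetitions together with palindromic triples $(a,b,a)$---yields $s(\pi(\beta))\cdot\gamma(\beta)=\beta$, so the inverse of $\phi$ is simply $(q,g)\mapsto s(q)\cdot g$.

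The main obstacles in completing the argument will be (i) verifying that $\gamma$ descends to equivalence classes, which follows from compatibility of the defining relations on $Z(M,\infty)$ with both $\mu$ and the tuple-reversal that gives inverses in $G(M,\infty)$; (ii) checking continuity of $\phi$ and $\phi^{-1}$, which reduces to continuity of $s$, continuity of $\mu$ (Lemma \ref{lem:continuity}), and continuity of inversion in the topological group $G(M,\infty)$; and (iii) confirming freeness of the action on each fibre, inherited from right translation in $G(M,\infty)$ through the relation $\beta=s(\pi(\beta))\cdot\gamma(\beta)$. Equivariance, $\phi(\beta\cdot h)=(p(\beta),\gamma(\beta)\cdot h)$, is immediate from the formula. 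Since every orbit thus admits a trivializing neighborhood, $p$ is a fibre bundle with fibre $G(M,\infty)$.
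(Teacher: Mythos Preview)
Your argument is correct and is essentially the paper's own proof (carried out in Lemma~\ref{lem:principal G bundle}): your section $s(q)=[q,x_k,\ldots,v_0]$ is exactly the paper's $x\mapsto[x,p]\cdot e_p$, and your $\gamma(\beta)$ is the paper's $\theta_p(e)=e_p^{-1}\cdot[p,\pi(e)]\cdot e$. The only cosmetic difference is that the paper trivializes over convex balls $U\subset M$ using $\pi$ directly (thereby identifying the orbit space with $M$), whereas you trivialize over their images $\bar U$ in $Z(M,\infty,v_0)/G(M,\infty)$; since $\pi$ is $G(M,\infty)$-invariant and your trivialization shows each $\pi$-fibre is a single orbit, the two formulations coincide.
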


Let $P(M)$ denote the pointed path-space of $M$, that is the based
continuous functions $f:[0,1] \to\ M$ such that $f(0)= v_0$.

\begin{thm}\label{thm:bundle}
Assume that $M$ is a path-connected Riemannian manifold.
\begin{enumerate}
  \item The space $Z(M,\infty,v_0)$ is contractible.
  \item The projection $\pi: Z(M,\infty,v_0) \to\ M$ is the
projection in a principle $G(M,\infty)$-bundle.
\item  There is a map $\Theta: X(M,\infty)\to\ \Lambda(M)$
together with

morphisms of fibrations
\[
\begin{CD}
 G(M,\infty) @>{\Theta}>> \Omega(M)   \\
 @VVV           @VV{}V     \\
Z(M,\infty,v_0)  @>{\Theta}>> P(M) \\
 @VV{\pi}V           @VV{}V     \\
M @>{1}>> M
\end{CD}
\] and

\[
\begin{CD}
 G(M,\infty) @>{\Theta}>> \Omega(M)   \\
 @VVV           @VV{}V     \\
X(M,\infty)  @>{\Theta}>> \Lambda(M) \\
 @VV{\pi}V           @VV{}V     \\
M @>{1}>>             M.
\end{CD}
\]

\item The maps $$\Theta: G(M,\infty) \to\ \Omega(M),$$ and
$$\Theta:X(M,\infty)\to\ \Lambda(M)$$
are weak homotopy equivalences.
\end{enumerate}

\end{thm}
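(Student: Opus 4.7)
The plan is to establish the four parts in order: prove (1) directly by a Milnor-style contraction, deduce (2) by combining (1) with Lemma \ref{lem:universal.bundle} and an identification of the orbit space with $M$, then construct the map $\Theta$ and verify the fibration morphisms in (3), and finally derive (4) from those morphisms by the five-lemma applied to the long exact homotopy sequences.

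For (1), I would mimic Milnor's contraction of his path-space analogue \cite{Milnor} combined with the standard contraction of $P(M)$. Given a representative $(x_k,\ldots,x_1,v_0)$, let $\gamma_i:[0,1]\to M$ be the unique minimal geodesic from $x_i$ to $x_{i+1}$ and $\phi:[0,1]\to M$ the concatenation with $\gamma_i$ covering $[i/k,(i+1)/k]$. At time $t\in[0,1]$ pick $j=j(t)$ with $j/k\le 1-t\le (j+1)/k$ and set $H_t[x_k,\ldots,x_0]=[\phi(1-t),x_j,\ldots,x_1,v_0]$. At $t=0$ the class $[\phi(1),x_k,\ldots,v_0]$ equals $[x_k,\ldots,v_0]$ by the equivalence relation, and at $t=1$ it equals $[v_0]$. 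Across each boundary time $1-t=j/k$ the inserted vertex $\phi(1-t)$ coincides with $x_j$, so the two one-sided limits lie in the same equivalence class and $H_t$ is well defined and continuous on $Z(M,\infty,v_0)\times[0,1]$. Verifying that every intermediate pair of coordinates $(\phi(1-t),x_j)$ continues to admit a unique minimal geodesic uses Lemma \ref{lem:convex balls}; this is the step I expect to be the main technical obstacle, and it is essentially the content of Lemma \ref{lem:contractability}.

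For (2), the action $\mu:Z(M,\infty,v_0)\times G(M,\infty)\to Z(M,\infty,v_0)$ of Lemma \ref{lem:continuity} concatenates on the right and therefore preserves the leading coordinate $x_k$. Hence $\pi$ factors through the orbit projection of Lemma \ref{lem:universal.bundle} and induces a continuous surjection (by Lemma \ref{lem:path-connectivity}) $Z(M,\infty,v_0)/G(M,\infty)\to M$. Injectivity on orbits follows because two classes with the same leading coordinate $x$ can be joined through $v_0$ using the reverse of one concatenated with the other, producing an element of $G(M,\infty)$ which carries one to the other. Combined with Lemma \ref{lem:universal.bundle} this identifies $\pi$ with a principal $G(M,\infty)$-bundle.

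For (3) and (4), define $\Theta[x_k,\ldots,x_0]$ to be the piecewise-geodesic path traversing $\gamma_0,\gamma_1,\ldots,\gamma_{k-1}$ in succession, with $\gamma_i$ assigned to $[i/k,(i+1)/k]$. Well-definedness on the identification space is immediate: a collision $x_i=x_{i+1}$ produces a constant segment whose elimination is absorbed by the natural reparametrization of $[0,1]$. The same formula is used on the subspaces $G(M,\infty)\subset Z(M,\infty,v_0)$ and on $X(M,\infty)$, and commutativity of both diagrams of (3) with the evaluation maps defining $P(M)\to M$ and $\Lambda(M)\to M$ is immediate. For (4), in the first diagram both total spaces $Z(M,\infty,v_0)$ and $P(M)$ are contractible -- the former by (1), the latter by the standard contraction of $P(M)$ -- and the base map is the identity on $M$, so the five-lemma on the long exact sequences of homotopy groups forces $\Theta:G(M,\infty)\to\Omega(M)$ to be a weak homotopy equivalence. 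Feeding this fiber equivalence together with the identity on the base into the second diagram and applying the five-lemma once more yields the weak equivalence $\Theta:X(M,\infty)\to\Lambda(M)$.
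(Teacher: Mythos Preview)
Your proposal has a genuine gap that affects both (1) and (3): the equal-subdivision parametrization you use, assigning $\gamma_i$ to the interval $[i/k,(i+1)/k]$, depends on the length $k+1$ of the chosen representative, so neither $H_t$ nor $\Theta$ is well defined on equivalence classes. Concretely, take $(x_1,x_1,v_0)\sim(x_1,v_0)$ with $x_1\neq v_0$. For the first representative your $\phi$ is constant on $[1/2,1]$, so $H_{1/4}$ outputs $[x_1,x_1,v_0]=[x_1,v_0]$; for the second representative $H_{1/4}$ outputs $[\gamma(3/4),v_0]$ where $\gamma$ is the geodesic from $v_0$ to $x_1$. These classes differ. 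The same example shows your $\Theta$ sends the two representatives to differently parametrized paths in $P(M)$. The phrase ``absorbed by the natural reparametrization of $[0,1]$'' is precisely where the argument fails: reparametrization changes the point of $P(M)$ or $\Lambda(M)$.

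The paper handles these two issues differently. For $\Theta$ (Section~5) it parametrizes by \emph{arc length}: with $\lambda_i$ the length of $\gamma_i$ and $L=\sum_i\lambda_i$, the segment $\gamma_j$ is run over $[\delta_{j-1},\delta_j]$ where $\delta_j=(\lambda_1+\cdots+\lambda_j)/L$. A repeated vertex $x_i=x_{i+1}$ then contributes a zero-length segment, so deleting it leaves the map to $M$ literally unchanged. For the contraction (Section~4, Lemma~\ref{lem:contractability}) the paper does \emph{not} slide along the whole concatenated path; it moves only the leading vertex, setting $h_k(t,\vec x)=(\Phi(x_k,x_{k-1})(t),x_{k-1},\ldots,v_0)$ on reduced representatives in $D(M,k,v_0)$, which lowers the filtration degree by one. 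Iterating these one-step retractions gives (weak) contractibility.

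Your approach to (2), via Lemma~\ref{lem:universal.bundle} together with an identification of the orbit space with $M$, is also not the paper's route. The paper (Lemma~\ref{lem:principal G bundle}) constructs explicit local trivializations: over a convex neighborhood $U_p$ of $p$, fix $e_p\in\pi^{-1}(p)$ and define $\phi_p(x,g)=[x,p]\cdot e_p\cdot g$ with inverse $e\mapsto(\pi(e),\,e_p^{-1}\cdot[p,\pi(e)]\cdot e)$, then checks the transition functions. Your argument establishes that $Z(M,\infty,v_0)/G(M,\infty)\to M$ is a continuous bijection but not that the inverse is continuous; producing a continuous local section is exactly the content of the paper's trivialization, so the step you skip is the substantive one.
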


The proof of this theorem is given in sections $3$, $4$ and $5$.

The analogous case for certain topological groups is given next
where the free loop space of $G/\Gamma$ is considered with $G$ a
simply-connected topological group and with $\Gamma$ a closed
discrete subgroup of $G$. The structure of $\Lambda(G/\Gamma)$ for
some of these are then tied in below with the model above using
``small geodesics".

First, let $\Gamma$ denote a discrete group which acts freely and
properly discontinuously on a manifold $M$ ( via a left action ).
Let $$p: M \to\ M/\Gamma $$ denote the natural associated covering
space projection. The structure of $\Lambda(M/ \Gamma)$ is standard,
follows from properties of covering spaces and is described next for
purposes of exposition.

A point in the orbit space $M/\Gamma$ is the orbit
$$[m] = m \cdot \Gamma$$ for $m$ in $M$. Let $g$ denote an element of $\Gamma$ and
define $$P_g(M)$$ to be the paths in $M$ which start at a fixed
point $*_M$ which end at $g(*_M)$. Define $$\Theta: \amalg_{g
\epsilon \Gamma} P_g(M) \to\ \Omega(M) $$ by the formula
$\Theta(f)(t) = p(f(t))$.

Next, assume that $M = G$ is a simply-connected topological group
with $\Gamma$ a discrete subgroup of $G$. In this case, there is a
natural left $G$-action on $\amalg_{g \epsilon \Gamma} P_g(G)$ given
by conjugation specified by the formula
$$[\alpha(f)](t) = \alpha \cdot (f(t)) \cdot \alpha^{-1}$$ for $f$ in
$\amalg_{g \epsilon \Pi}P_g(G)$ $\alpha$ in $\Pi$. Furthermore, let
$1_G$ denotes the identity element in the topological group $G$
which is assumed to be the base-point. Evidently, conjugation by any
element in $G$ preserves $1_G$. Using the action of $G$ on itself
via right multiplication by the inverse of an element $\alpha$ in
$G$, there is an induced diagonal action of $G$ on $$ G \times
\amalg_{g \epsilon \Gamma} P_g(G)$$ together with an induced map
$$\Phi: G \times_{\Pi} \amalg_{g \epsilon \Gamma} P_g(G) \to\
\Lambda(G/\Gamma)$$ given by
$$\Phi(m,f)(t) = \pi[m \cdot f(t)].$$ Notice that there is a commutative
diagram

\[
\begin{CD}
\amalg_{g \epsilon \Gamma} P_g(G) @>{\Phi}>>  \Omega(G/\Gamma)  \\
 @VVV           @VV{}V     \\
G \times_{\Gamma} \amalg_{g \epsilon \Gamma} P_g(G) @>{\Phi}>> \Lambda(G/\Gamma) \\
 @VVV           @VV{}V     \\
G/\Gamma @>{1}>> G/\Gamma.
\end{CD}
\]

The next proposition is standard.
\begin{prop}\label{prop:adjoint.rep}
\begin{enumerate}
  \item If $M$ is a path-connected space, then
$$\Theta: \amalg_{g \epsilon \Pi} P_g(M) \to\ \Omega(M)$$
is a homeomorphism.
  \item If $G$ is a simply-connected Lie group and $\Pi$ is a discrete
subgroup of $G$, then $$\Phi: G \times_{\Pi} \amalg_{g \epsilon \Pi}
P_g(G) \to\ \Lambda(G/\Pi)$$ is a homeomorphism.
\end{enumerate}
\end{prop}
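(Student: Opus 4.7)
Both parts rest on the unique path-lifting property for the covering maps $p:M\to M/\Pi$ and $\pi:G\to G/\Pi$, together with the fact that a covering map is a Hurewicz fibration, so path lifting is continuous in the compact-open topology. For part (1), I would exhibit the inverse explicitly. Given a loop $\gamma\in\Omega(M/\Pi)$ based at $p(*_M)$, let $\tilde\gamma\in P(M)$ be the unique lift with $\tilde\gamma(0)=*_M$. The endpoint $\tilde\gamma(1)$ lies in the discrete fibre $p^{-1}(p(*_M))=\Pi\cdot *_M$, so $\tilde\gamma(1)=g\cdot *_M$ for a unique $g\in\Pi$, and therefore $\tilde\gamma\in P_g(M)$. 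Bijectivity is immediate; continuity of $\Theta$ is post-composition with $p$; continuity of the inverse is the continuity of the lifting function, together with the observation that the discrete endpoint assignment $\tilde\gamma\mapsto g$ separates the $P_g(M)$ as clopen subsets, so the disjoint-union topology on $\amalg_{g\in\Pi}P_g(M)$ agrees with its subspace topology in $P(M)$.

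For part (2), the idea is to lift a free loop and then left-translate it so that it begins at the identity, landing inside some $P_g(G)$. Given $\gamma\in\Lambda(G/\Pi)$, choose any $m\in G$ with $\pi(m)=\gamma(0)$, lift $\gamma$ to a path $\tilde\gamma:[0,1]\to G$ with $\tilde\gamma(0)=m$, and set $f(t):=m^{-1}\tilde\gamma(t)$. Then $f(0)=1_G$ and $f(1)=g$ for the unique $g\in\Pi$ with $\tilde\gamma(1)=mg$, so $f\in P_g(G)$, and by construction $\Phi(m,f)(t)=\pi(m\cdot f(t))=\pi(\tilde\gamma(t))=\gamma(t)$, giving surjectivity. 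For injectivity on the quotient $G\times_\Pi\amalg_g P_g(G)$, the only ambiguity is the initial choice of lift $m$: replacing $m$ by $m\alpha^{-1}$ for $\alpha\in\Pi$ replaces $f$ by $\alpha f\alpha^{-1}$, which is precisely the diagonal $\Pi$-action described before the proposition.

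The principal obstacle is upgrading the pointwise inverse in part (2) to a continuous map, because the choice of lift $m$ cannot in general be made continuously on all of $\Lambda(G/\Pi)$. The plan is to work over an evenly-covered neighbourhood $U$ of $\gamma(0)$ in $G/\Pi$: on the open set $\{\gamma\in\Lambda(G/\Pi):\gamma(0)\in U\}$ a continuous local section of $\pi$ supplies a continuous choice of $m$, hence a continuous map into $G\times\amalg_g P_g(G)$; two such local inverses differ exactly by the diagonal $\Pi$-action, so they agree after passing to the orbit space. Gluing the local inverses and composing with the continuous lifting function for $\pi$ then produces the required continuous two-sided inverse to $\Phi$, completing the proof.
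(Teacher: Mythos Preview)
The paper does not actually prove this proposition: it is introduced with the sentence ``The next proposition is standard'' and no argument is supplied. Your covering-space argument is exactly the standard one being alluded to, and it is correct. In particular, your silent correction of the target in part~(1) from $\Omega(M)$ to $\Omega(M/\Pi)$ is right (the map $\Theta(f)(t)=p(f(t))$ lands in $M/\Pi$, not in $M$), and your verification in part~(2) that the ambiguity $m\mapsto m\alpha^{-1}$ forces $f\mapsto \alpha f\alpha^{-1}$ matches the diagonal $\Pi$-action described just before the proposition. The local-section argument for continuity of the inverse in part~(2) is the appropriate way to handle the lack of a global section of $\pi$, and the pieces glue on the quotient precisely because two local inverses differ by the $\Pi$-action; so there is no gap.
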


A combinatorial model for the spaces of maps of a surface $S_g$ to
$M$ is given next. Fix a topological group $G$ and consider the
commutator map $$\chi_g: G^{2g} \to G$$ given by
$$\chi_g(x_1,x_2, \cdots,x_{2g}) = [x_1,x_2]\cdot [x_3,x_4] \cdots
[x_{2g-1},x_{2g}]$$ for which $[a,b] = aba^{-1}b^{-1}$.

Let $$\xi(\chi_{g})$$ denote the homotopy theoretic fibre of the map
$\chi_g$. Then it is classical that there is a map
$$\alpha: \xi(\chi_{g}) \to map_*(S_g,BG)$$ which is a homotopy equivalence.
In case $G =  G(M,\infty)$, then $BG$ is homotopy equivalent to $M$
by Theorem \ref{thm:bundle}.

A specific concrete model for this homotopy theoretic fibre is
described next. Let $PG$ denote the path-space for $G$, that is the
space of continuous functions $f\colon [0,1] \to G$ such that $f(0)
= 1_G$ ( Any choice of base-point suffices here. )

Thus there is a projection $$p\colon PG \to G$$ given by $$p(f) =
f(1).$$ Define $\xi_g(G)$ as the pull-back in the following diagram:

\[
\begin{CD}
\xi_g(G) @>>> PG \\
@VV{}V      @VV{p}V  \\
G^{2g} (Y)@>>{\chi_g}> G
\end{CD}
\]

One example is given by $$G = G(M,\infty)$$ the topological group
given above which is homotopy equivalent $\Omega(M)$. This group
will be used next to give a model for the space of pointed
continuous maps $map_*(S_g,M)$.

The final example is the space of maps $map(S^1,BG)$ where it is
again classical that this space is homotopy equivalent to the
homotopy orbit space ( sometimes called the Borel construction )
$$EG \times_G G^{ad}$$ where $G^{ad}$ denotes the left $G$-space
where the action is conjugation. Again, let $$G = G(M,\infty)$$ and
$$EG = Z(M,\infty,v_0).$$

\begin{cor}\label{thm:maps.from.surfaces.to.BG}
Assume that $M$ is a simply-connected Riemannian manifold.
\begin{enumerate}
  \item If  $G = G(M,\infty)$, then there is a weak homotopy equivalence
$$\xi_g(G) \to map_*(S_g,M).$$
\item If  $G = G(M,\infty)$ and $EG = Z(M,\infty,v_0)$, then there is a weak homotopy equivalence
$$EG \times_G \xi_g(G) \to map(S_g, M).$$
  \item If  $G = G(M,\infty)$ and $EG = Z(M,\infty,v_0)$, then there is a weak homotopy equivalence
  $$EG \times_G G^{ad} \to \Lambda M.$$
\end{enumerate}

\end{cor}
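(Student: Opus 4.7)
The plan is to reduce each of the three parts to a classical description of mapping spaces into a classifying space, and then transport the statements via the identifications supplied by Theorem \ref{thm:bundle}. Writing $G = G(M,\infty)$, that theorem produces a principal $G$-bundle $\pi \colon Z(M,\infty,v_0) \to M$ with contractible total space and a weak equivalence $\Theta \colon G \to \Omega M$. Since $M$ is simply-connected the topological group $G$ is connected, so $\pi$ has the formal properties of a universal $G$-bundle, allowing $M$ and $Z(M,\infty,v_0)$ to serve as models for $BG$ and $EG$ respectively.

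For part (1), I would apply $\operatorname{map}_*(-,M)$ to the standard cofibre sequence $S^1 \xrightarrow{w} \bigvee^{2g} S^1 \to S_g$, where $w = [a_1,b_1]\cdots[a_g,b_g]$ is the attaching map of the top cell of $S_g$. The resulting fibration sequence
\[
\operatorname{map}_*(S_g,M) \to (\Omega M)^{2g} \xrightarrow{\chi_g} \Omega M
\]
exhibits $\operatorname{map}_*(S_g,M)$ as the homotopy fibre of the commutator map on $\Omega M$. Since $\Theta$ is a weak equivalence of topological monoids, $\Theta^{2g}$ and $\Theta$ intertwine the two versions of $\chi_g$, and the induced map of homotopy fibres $\xi_g(G) \to \operatorname{map}_*(S_g,M)$ is a weak equivalence.

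Parts (2) and (3) proceed by fibrewise comparison over $M$. The basepoint-evaluation maps $\operatorname{map}(S_g,M) \to M$ and $\Lambda M \to M$ are fibrations with fibres $\operatorname{map}_*(S_g,M)$ and $\Omega M$, while the associated bundles $EG \times_G \xi_g(G) \to EG/G = M$ and $EG \times_G G^{ad} \to M$ are fibrations with fibres $\xi_g(G)$ and $G^{ad}\simeq G$. A comparison map over $M$ is constructed by using a path-class in $Z(M,\infty,v_0)$ (a concatenation of small geodesics) to translate the basepoint of a pointed $S_g$-map or pointed loop; this descends to the Borel quotient, is the identity on the base, and restricts on each fibre either to the weak equivalence of part (1) or to $\Theta$. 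A five-lemma argument on the long exact sequences of homotopy groups then yields parts (2) and (3).

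The main obstacle will be the explicit description and verification of continuity and $G$-equivariance of these comparison maps in the small-geodesic model. The partial-product structure supplied by Lemma \ref{lem:continuity} together with the contractibility of $Z(M,\infty,v_0)$ from Theorem \ref{thm:bundle} should provide the desired translation of basepoints, but one must check that this translation is both continuous and compatible with the conjugation action of $G$, and that fibrewise it reproduces the equivalence from part (1), before the five-lemma can be applied.
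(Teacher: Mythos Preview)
Your proposal is correct and follows the same line as the paper. In fact the paper does not give a separate proof of this corollary at all: the discussion immediately preceding its statement simply records as ``classical'' the homotopy equivalences $\xi(\chi_g) \simeq \operatorname{map}_*(S_g,BG)$ and $EG \times_G G^{ad} \simeq \operatorname{map}(S^1,BG)$, and then invokes Theorem~\ref{thm:bundle} to identify $BG$ with $M$ and $EG$ with $Z(M,\infty,v_0)$. Your sketch is exactly an unpacking of these classical facts---the cofibre sequence for $S_g$ yielding part~(1), and the fibrewise comparison over $M$ for parts~(2) and~(3)---so you are supplying the details the paper omits rather than taking a different route.
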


This article is organized as follows.
\begin{description}
\item[1] Introduction
\item[2] Constructions statements of results
\item[3] Proof of Theorem \ref{thm:bundle}
\item[4] Contractibility of $Z(M,\infty,v_0)$
\item[5] Maps to free loop spaces
\item[6] Two problems
\item[7] Acknowledgements
\end{description}

\section{Proof of Theorem \ref{thm:bundle}}

Additional information required here is listed next. Let $$G(M)$$
denote the space of ordered pairs $(a,b)$ in $ M \times M$ such that
both $a$ and $b$ lie in some convex ball $W$. Thus there is an
unique geodesic arc $f_{(a,b)}(-)$ from $a$ to $b$ which is
parameterized by arc-length with image in $W$. There is an
associated function $\Phi:G(M) \to\ map([0,1],M)$ given by
$\Phi((a,b)) = f_{(a,b)}(-)$.

\begin{lem}\label{lem:convex balls}
Assume that $M$ is a path-connected Riemannian manifold.
\begin{enumerate}
  \item Let $p$ denote a point of $M$. Then there exists an $\epsilon > 0$ such
that there is a convex ball of radius $\epsilon$ containing $p$.
  \item Furthermore, the function $\Phi:G(M) \to\ map([0,1],M)$
is continuous.
\end{enumerate}
\end{lem}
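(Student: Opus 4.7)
The plan is to reduce both assertions to standard facts about the exponential map and the existence of convex neighborhoods on a Riemannian manifold, specifically Whitehead's theorem on convex neighborhoods.

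For part (1), I would invoke Whitehead's theorem directly: at every $p \in M$ there exists $r(p) > 0$ such that for every $0 < \epsilon \leq r(p)$ the open metric ball $B_\epsilon(p)$ is geodesically convex, meaning that any two points of the ball are joined by a unique minimal geodesic that lies entirely in the ball. The standard proof proceeds by passing to normal coordinates via $\exp_p$, showing that the Hessian of the squared distance function $d(\cdot, p)^2$ is positive definite on a sufficiently small geodesic ball, and using this to force convexity. Taking $\epsilon$ smaller than both $r(p)$ and the injectivity radius at $p$ then yields the desired convex ball containing $p$.

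For part (2), I would argue locally. Fix $(a_0, b_0) \in G(M)$ and choose a convex ball $W$ containing both coordinates. Since $W$ is open in $M$, the intersection $(W \times W) \cap G(M)$ is an open neighborhood of $(a_0, b_0)$ in $G(M)$. For $(a,b)$ in this neighborhood, the restriction of $\exp_a$ to a star-shaped neighborhood of the origin in $T_a M$ is a diffeomorphism onto $W$, so its local inverse is defined at $b$, and the unique minimal geodesic from $a$ to $b$, parameterized at constant speed on $[0,1]$, is
\[
f_{(a,b)}(t) \;=\; \exp_a\!\bigl(t \cdot \exp_a^{-1}(b)\bigr).
\]
The smoothness of the global exponential map $\exp : TM \to M$ together with the smooth base-point dependence of its local inverse on $W$ implies that $(a,b) \mapsto \exp_a^{-1}(b) \in TM$ is continuous. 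Composing with $\exp$ shows that $(a,b,t) \mapsto f_{(a,b)}(t)$ is jointly continuous, and compactness of $[0,1]$ upgrades this to continuity of $\Phi$ into $map([0,1], M)$ with the compact-open (equivalently, sup-metric) topology.

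The main technical point I expect to be the obstacle is verifying that the local formula above is globally well defined on $G(M)$, independent of the choice of convex ball $W$. This is settled by the uniqueness clause built into the definition of $G(M)$: if $W$ and $W'$ are two convex balls each containing both $a$ and $b$, the unique minimality of the geodesic from $a$ to $b$ forces the two resulting parameterized paths $f_{(a,b)}$ to coincide. Hence the local continuity statements on sets of the form $(W \times W) \cap G(M)$ patch together along the open cover they provide of $G(M)$, yielding continuity of $\Phi$ on all of $G(M)$.
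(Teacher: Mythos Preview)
Your argument is correct. For part~(1) you and the paper do essentially the same thing: invoke the existence of convex normal neighborhoods via the exponential map (the paper cites \cite{EDM}; you name Whitehead's theorem and sketch the Hessian argument, which is more informative but not a different idea).

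For part~(2) the two arguments diverge. The paper works purely at the level of the compact-open topology: given a subbasic open set $U(K,V)$ containing $f_{(a,b)}$, it decomposes $K$ into finitely many closed intervals, covers the compact image $f(K)$ by finitely many convex balls $O_i \subset V$, and argues that each $\Phi^{-1}\bigl(U([x_j,y_j],O_i)\bigr)$ is open in $G(M)$ because it is identified with $O_i \times O_i$ via endpoint evaluation. You instead exploit the differential-geometric structure directly, writing down the closed formula $f_{(a,b)}(t) = \exp_a\bigl(t\,\exp_a^{-1}(b)\bigr)$ on a local chart $W \times W$, reading off joint continuity in $(a,b,t)$ from smoothness of $\exp$ and its local inverse, and then passing to the mapping space via compactness of $[0,1]$. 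Your route is shorter and more transparent, and it has the advantage of making the smooth dependence explicit; the paper's route has the mild advantage of never needing to name the inverse exponential or worry about its domain, since it only manipulates open sets in the target. Your final paragraph on well-definedness across overlapping convex balls is a point the paper leaves implicit, and it is good that you address it.
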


The first part of the proof is given in \cite{EDM}, page $1123$
while the second follows directly from properties of the
compact-open topology for the function space $map([0,1],M)$. Details
are included in section $4$ below.

\begin{lem}\label{lem:principal G bundle}
The projection $\pi: Z(M,\infty,v_0) \to\ M$ is the projection in a
principal $G(M,\infty)$-bundle.
\end{lem}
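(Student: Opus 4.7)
My plan is to deduce the principal bundle structure on $\pi$ from Lemma~\ref{lem:universal.bundle} (which provides a fiber bundle on the orbit projection $p'\colon Z(M,\infty,v_0)\to Z(M,\infty,v_0)/G(M,\infty)$) by identifying the orbit space with $M$ via $\pi$. Since the right action $\mu$ appends at the $v_0$-end while $\pi$ records the first coordinate $x_k$, $\pi$ is $G(M,\infty)$-invariant and factors as $\pi=\bar\pi\circ p'$ for a continuous map $\bar\pi\colon Z(M,\infty,v_0)/G(M,\infty)\to M$. It then suffices to show (a) $\bar\pi$ is a homeomorphism and (b) the $G(M,\infty)$-action on $Z(M,\infty,v_0)$ is free.

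For (a), surjectivity is contained in Lemma~\ref{lem:path-connectivity}. For injectivity, given $w_1=[q,a_{r-1},\dots,a_1,v_0]$ and $w_2=[q,b_{s-1},\dots,b_1,v_0]$ with $\pi(w_1)=\pi(w_2)=q$, I would write down the explicit loop
$$g=[v_0,a_1,\dots,a_{r-1},q,b_{s-1},\dots,b_1,v_0],$$
which lies in $G(M,\infty)$ because each of its consecutive pairs is also consecutive (possibly reversed) in $w_1$ or $w_2$ and so inherits the unique-minimal-geodesic property. A sequence of Milnor-type reductions (collapsing consecutive duplicates and removing backtracking triples $x_{i-1}=x_{i+1}$) verifies $w_1\cdot g\sim w_2$. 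For continuity of $\bar\pi^{-1}$ I would construct a local section of $\pi$. Given $p\in M$, Lemma~\ref{lem:convex balls}(1) provides a convex ball $W_p$ containing $p$, and Lemma~\ref{lem:path-connectivity} yields a representative $(p,z_{k-1},\dots,z_1,v_0)$ of some point of $\pi^{-1}(p)$. Define
$$s_p\colon W_p\to Z(M,\infty,v_0),\qquad s_p(q)=[q,p,z_{k-1},\dots,z_1,v_0].$$
Convexity of $W_p$ supplies a unique minimal geodesic from $p$ to each $q\in W_p$, so the tuple does lie in $Z(M,k+1)$; continuity follows from continuity of the inclusion $W_p\hookrightarrow M^{k+2}$ and of the quotient $\amalg_k Z(M,k)\to Z(M,\infty)$. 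Composing $s_p$ with $p'$ gives a continuous right inverse of $\bar\pi$ on $W_p$, which must agree with $\bar\pi^{-1}|_{W_p}$ since $\bar\pi$ is bijective. Hence $\bar\pi$ is a homeomorphism.

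The main technical obstacle is (b). Following Milnor, I would argue that every class in $Z(M,\infty)$ admits a canonical reduced representative obtained by iterating the two reduction moves exhaustively, and that concatenation followed by reduction mimics reduction of words: if $w\cdot g\sim w$, every coordinate of $g$ must cancel during reduction, forcing $g=[v_0]$. With (a) and (b) established, Lemma~\ref{lem:universal.bundle} and the homeomorphism $\bar\pi$ realize $\pi$ as a fiber bundle with fiber $G(M,\infty)$, and freeness of the action upgrades this to a principal $G(M,\infty)$-bundle, as claimed.
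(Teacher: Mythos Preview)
Your route is genuinely different from the paper's, and it mostly works, but it leans on Lemma~\ref{lem:universal.bundle} in a way that is circular in this paper. The text introduces Lemma~\ref{lem:universal.bundle} with ``as proven below''; there is no independent proof of it, and the argument that actually establishes local triviality of the orbit projection \emph{is} the proof of Lemma~\ref{lem:principal G bundle}. So invoking Lemma~\ref{lem:universal.bundle} to prove Lemma~\ref{lem:principal G bundle} assumes the very thing you are asked to show. A second, smaller gap: ``free action $+$ fibre bundle on the orbit map $\Rightarrow$ principal bundle'' is not automatic; you still need that the local trivializations can be chosen $G(M,\infty)$-equivariantly (equivalently, that the division map $(e_1,e_2)\mapsto g$ with $e_1\cdot g=e_2$ is continuous on $Z(M,\infty,v_0)\times_M Z(M,\infty,v_0)$).

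The paper avoids both issues by constructing the local trivialization and its inverse explicitly. Over a convex ball $U_p$ it fixes $e_p\in\pi^{-1}(p)$, writes its formal reversal $e_p^{-1}$, and sets
\[
\phi_p(x,g)=[x,p]\cdot e_p\cdot g,\qquad \Theta(e)=\bigl(\pi(e),\,e_p^{-1}\cdot[p,\pi(e)]\cdot e\bigr),
\]
then checks by direct cancellation that $\phi_p$ and $\Theta$ are mutual inverses and that the transition functions $g_{p,q}(x)=e_p^{-1}\cdot[p,x,q]\cdot e_q$ act by left translation in $G(M,\infty)$. Freeness is never proved separately; it falls out of $\phi_p$ being a homeomorphism. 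Notice that your local section $s_p$ is exactly $\phi_p(-,[v_0])$, and your element $g$ witnessing that two points over $q$ lie in one orbit is exactly the second coordinate of $\Theta$ evaluated at $w_2$ (with $e_p$ replaced by $w_1$). In other words, you already have all the pieces of the paper's proof; if you drop the appeal to Lemma~\ref{lem:universal.bundle} and instead write down $\Theta$ explicitly and verify $\phi_p\circ\Theta=\id$, $\Theta\circ\phi_p=\id$, you obtain a self-contained argument equivalent to the paper's, and the principal structure is then immediate from the form of $\phi_p$.
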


\begin{proof}
Fix a point $p$ in $M$. By \ref{lem:convex balls}, it may be assumed
that there exists a neighborhood $U_p$ of $p$ for which there is a
unique minimal geodesic between any two points $x$ and $y$ in $U_p$.
Thus the pair $(x,p)$ is in $Z(M,2)$ for $x$ in $U_p$.

Notice that by \ref{lem:path-connectivity}, $\pi^{-1}(p)$ is
non-empty. Let $e_p$ denote a fixed choice of point in $\pi^{-1}(p)$
regarded as a subspace of $Z(M,\infty,v_0)$. Hence
$$e_p = [p,x_{k-1}, \cdots, x_1,v_0].$$ Write
$$e_p^{-1} = [v_0,x_{1},x_2, \cdots, x_{k-1},p],$$ a point in
$Z(M,\infty)$, but not in $Z(M,\infty,v_0)$.

Let $g$ denote a point in $G(M,\infty)$, then $g = [v_0,y_{n-1},
\cdots, y_1,v_0]$. Thus by the definition of the map $\mu$, the
element $[x,p]\cdot e_p \cdot g$ is an element of $Z(M,\infty,v_0)$.
Define $\phi_p: U_p \times G(M,\infty) \to\ Z(M,\infty,v_0)$ by the
formula $$\phi_p(x,g)= [x,p]\cdot e_p \cdot g.$$ By
\ref{lem:continuity}, $\phi_p$ is continuous. Furthermore,
$$\pi(\phi_p(x,g)) = x $$ by definition. Thus $\phi_p$ takes
values in $\pi^{-1}(U_p)$ and is a continuous function
$$\phi_p: U_p \times G(M,\infty) \to\ \pi^{-1}(U_p).$$

Let $e$ be any point in $\pi^{-1}(U_p)$. Since $e = [x_k, \cdots,
x_1, v_0 ]$ $\pi(e)= x_k$ in $U_p$, there is an unique minimal
geodesic from $p$ to $\pi(e)$ $[p,x_k] = [p,\pi(e)]$. Notice that
the product $$e_p^{-1} \cdot [p,\pi(e)]\cdot e$$ is equal to
$$[v_0,x_1, \cdots, x_{k-1},p] \cdot [p,x_k]\cdot [x_k,x_{k-1}
\cdots, v_0 ].$$ Hence this product is equivalent to
$$[v_0,x_1, \cdots, x_{k-1},p ,x_k, x_{k-1}, \cdots, v_0],$$ and
is an element of $G(M,\infty)$.

Define $$\theta_p:\pi^{-1}(U_p) \to\ G(M,\infty) $$ by the formula
$\theta_p(e) = e_p^{-1} \cdot [p,\pi(e)] \cdot e$. By the previous
paragraph, this function is continuous. There is an associated
continuous function $$\Theta:\pi^{-1}(U_p) \to\ U_p \times
G(M,\infty) $$ given by $$\Theta(e) =  (\pi(e), \theta_p(e)).$$

The following formulas are satisfied which imply directly that both
$\Theta$ $\phi_p$ are homeomorphisms so the projection map is
locally trivial.

\begin{enumerate}
  \item $(\phi_p)(\Theta(e)) = (\phi_p)((\pi(e), \theta_p(e)))$
  \item $(\phi_p)(\Theta(e)) =  [\pi(e), p] \cdot e_p \cdot
  \theta_p(e)$
  \item $(\phi_p)(\Theta(e))=[\pi(e), p] \cdot e_p \cdot
  e_p^{-1} \cdot [p,\pi(e)] \cdot e = e$,
\end{enumerate}

\begin{enumerate}
  \item $(\Theta)(\phi_p(x,g))=(\Theta)( [x,p]\cdot e_p \cdot g)$
  \item $(\Theta)(\phi_p(x,g))= (\pi([x,p]\cdot e_p \cdot
  g),\theta_p([x,p]\cdot e_p \cdot g))$
  \item $(\Theta)(\phi_p(x,g))= (x \cdot g)$.
\end{enumerate}

To finish, it suffices to see that the locally trivial projection
$\pi: Z(M,\infty,v_0) \to\ M$ is that of a principal bundle. This
follows by an inspection of the transition functions as follows. Let
$q$ denote a point in $M$ together with an open set $U_q$ in $M$
such that $U_p \cap U_q$ is non-empty. Let $x$ be an element of $U_p
\cap U_q$. Thus the element $[v_p,x,v_q]$ is defined. Define the
function
$$g_{p,q}: U_p \cap U_q \to\ G(M,\infty)$$ by the formula
$$g_{p,q}(x) = e_p^{-1}[v_p,x,v_q]\cdot e_q.$$

These formulas satisfy the following.
\begin{enumerate}
\item $\pi\circ \phi_p(x,g) = x$,
\item  $\phi_p(\pi(e),\theta_p(g)) \pi\circ \phi_p(x,g) = x$
\item  $\theta_p \circ \phi_p(x,g) = g$
\item  $\theta_p \circ \phi_q(x,g) = g_{p,q}(x) \cdot g$.
\end{enumerate}
The lemma follows.
\end{proof}

Dold gave a sufficient condition which insures that certain
functions are fibrations \cite{Albrecht}.
\begin{thm}
If $B$ is a paracompact Hausdorff space, a continuous map $\pi:E
\to\ B$ is a fibration if and only if the map $\pi$ is locally a
fibration.
\end{thm}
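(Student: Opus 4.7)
The plan is to prove the two directions separately, with the forward direction being essentially formal and the converse being the content of the theorem. For the forward direction, if $\pi\colon E\to B$ is a Hurewicz fibration and $U\subseteq B$ is open, then the restriction $\pi^{-1}(U)\to U$ inherits a lifting function by restricting a global one, since any path in $U$ is automatically a path in $B$. So one immediately gets that $\pi$ is locally a fibration (in fact locally fibrant over every open set). No paracompactness is needed here.

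For the converse, I would set up the standard reformulation: $\pi$ is a Hurewicz fibration if and only if it admits a lifting function $\lambda\colon E\times_B B^{[0,1]}\to E^{[0,1]}$, i.e.\ a continuous assignment sending $(e,\omega)$ with $\omega(0)=\pi(e)$ to a path $\lambda(e,\omega)$ in $E$ with $\lambda(e,\omega)(0)=e$ and $\pi\circ\lambda(e,\omega)=\omega$. The task becomes to construct such a global $\lambda$ from the local ones $\lambda_\alpha$ on $\pi^{-1}(U_\alpha)\to U_\alpha$, where $\{U_\alpha\}$ is the cover over which $\pi$ is known to be fibrant.

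Since $B$ is paracompact Hausdorff, refine $\{U_\alpha\}$ to a locally finite open cover $\{V_i\}_{i\in I}$ and choose a subordinate partition of unity $\{\phi_i\}$ with $\mathrm{supp}(\phi_i)\subseteq V_i\subseteq U_{\alpha(i)}$. Well-order $I$. Given a path $\omega\colon[0,1]\to B$ and starting point $e$ over $\omega(0)$, I would subdivide $[0,1]$ adaptively as follows: for each $t$, the set of indices $i$ with $\phi_i(\omega(t))>0$ is finite, and the cover $\{\phi_i^{-1}((0,1])\}$ is locally finite over the compact set $\omega([0,1])$, so one gets a finite partition $0=t_0<t_1<\cdots<t_n=1$ together with a choice of index $\alpha_k$ at each step such that $\omega([t_{k-1},t_k])\subseteq U_{\alpha_k}$. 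Then define $\lambda(e,\omega)$ piecewise by applying $\lambda_{\alpha_k}$ on each subinterval, starting from the endpoint of the previous segment.

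The main obstacle is continuity of $\lambda$: the combinatorial subdivision depends discontinuously on $(e,\omega)$, so one cannot literally glue naive restrictions. The standard fix, due to Dold, is to weight each local lifting by the partition of unity values along the path and use a rescaling of the time parameter so that transitions between charts happen smoothly. More precisely, one uses the function $t\mapsto\sum_i \phi_i(\omega(t))\cdot\chi_{[0,t]}$ (suitably normalized) to interpolate. Equivalently, one first establishes the intermediate lemma that local fibrancy over the sets of a numerable cover implies global fibrancy, then notes that on a paracompact Hausdorff base every open cover is numerable. I expect the bookkeeping of the time reparameterization and verification of continuity at the junction points $t_k$ to be the most technical part; once this is done, the lifting function so constructed satisfies $\pi\circ\lambda=\omega$ and $\lambda(e,\omega)(0)=e$ by construction, completing the proof.
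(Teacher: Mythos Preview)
The paper does not actually prove this theorem: it is quoted as a known result of Dold, with a citation to \cite{Albrecht}, and no argument is supplied. So there is no ``paper's own proof'' to compare your proposal against.

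Your sketch is a reasonable outline of the standard Dold argument and correctly isolates the real content (the converse direction) and the real difficulty (continuity of the glued lifting function when the subdivision of $[0,1]$ depends on the path). One small caution: your description of the ``standard fix'' is a bit vague, and the formula $t\mapsto\sum_i \phi_i(\omega(t))\cdot\chi_{[0,t]}$ as written does not quite make sense as a reparameterization. If you intend to write out a full proof rather than cite Dold, you will need to be precise about how the partition of unity is used to produce a \emph{canonical} subdivision and lifting that varies continuously in $(e,\omega)$; Dold's original paper or the treatment via numerable covers in standard references (e.g.\ tom Dieck or May) gives the exact bookkeeping. For the purposes of this paper, simply citing Dold as the authors do is entirely appropriate.
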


The next technical lemma is as follows.
\begin{lem}\label{lem:free loop fibration}
The projection $\pi: X(M,\infty,v_0) \to\ M$ is locally trivial and
a surjection. Thus the projection $\pi$ is a fibration (since $M$ is
paracompact and Hausdorff ).
\end{lem}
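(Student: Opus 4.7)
The plan is to mirror the proof of Lemma~\ref{lem:principal G bundle} but with conjugation in place of left multiplication, since $X(M,\infty)$ is the free-loop analog of $Z(M,\infty,v_0)$. Surjectivity of $\pi$ is immediate: for every $x\in M$ the tuple $[x]\in X(M,0)$ lies in $X(M,\infty)$ and satisfies $\pi([x])=x$, using that $X(M,0)=M$.

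For local triviality, fix $p\in M$ and, by Lemma~\ref{lem:convex balls}, a convex neighborhood $U_p$ on which $[x,p]$ and $[p,x]$ are genuine elements of $Z(M,2)$ for each $x\in U_p$. Choose $e_p\in\pi^{-1}(p)\subset Z(M,\infty,v_0)$ exactly as in the proof of Lemma~\ref{lem:principal G bundle}, and define
$$\phi_p\colon U_p\times G(M,\infty)\to \pi^{-1}(U_p),\qquad \phi_p(x,g)=[x,p]\cdot e_p\cdot g\cdot e_p^{-1}\cdot [p,x],$$
together with a candidate inverse
$$\theta_p\colon \pi^{-1}(U_p)\to G(M,\infty),\qquad \theta_p(e)=e_p^{-1}\cdot [p,\pi(e)]\cdot e\cdot [\pi(e),p]\cdot e_p.$$
Inspection of the first and last coordinates shows that $\phi_p$ lands in $X(M,\infty)$ and $\theta_p$ lands in $G(M,\infty)$, and both maps are continuous by Lemma~\ref{lem:continuity}.

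Verifying that $\phi_p$ and $\Theta=(\pi,\theta_p)$ are mutually inverse reduces to two elementary collapsing identities in the equivalence relation on $\amalg_k Z(M,k)$: the product $[x,p]\cdot [p,x]$ collapses to $[x]$ (remove the middle $p$ whose neighbors both equal $x$, then remove the resulting duplicate $x$), and $e_p\cdot e_p^{-1}$ collapses to $[v_0]$ by peeling symmetric entries from the center outward. Granting these reductions, the computations of charts and transition functions carry over verbatim from the proof of Lemma~\ref{lem:principal G bundle} and identify $\phi_p$ as a homeomorphism onto $\pi^{-1}(U_p)$.

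Finally, since a Riemannian manifold is paracompact and Hausdorff, Dold's theorem (stated immediately above the lemma) upgrades local triviality to a Hurewicz fibration. The main obstacle is the purely combinatorial bookkeeping of the two collapsing identities: neither is deep, but each requires iterating the elementary moves $x_i=x_{i+1}$ and $x_{i+1}=x_{i-1}$ in the correct order to obtain the strict identity in $Z(M,\infty)$ needed to identify $\phi_p\circ\Theta$ and $\Theta\circ\phi_p$ with the identity maps.
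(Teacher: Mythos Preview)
Your argument is correct and is essentially the paper's own approach: pick a path $e_p$ from $p$ to $v_0$, and trivialize $\pi^{-1}(U_p)$ by the conjugation formula $(x,g)\mapsto [x,p]\cdot e_p\cdot g\cdot e_p^{-1}\cdot[p,x]$ with inverse $e\mapsto(\pi(e),\,e_p^{-1}\cdot[p,\pi(e)]\cdot e\cdot[\pi(e),p]\cdot e_p)$. The only cosmetic differences are that the paper constructs $e_p$ explicitly by covering a path from $v_0$ to $p$ with finitely many convex balls rather than citing Lemma~\ref{lem:path-connectivity}, and that your surjectivity argument via $[x]\in X(M,0)=M$ is cleaner than the paper's (which reuses the covered-path construction); indeed your displayed formulas for $\phi_p$ and $\theta_p$ are written out more carefully than the paper's own expression for $\lambda$, which as printed omits the return leg $e_p^{-1}\cdot[p,x]$.
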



\begin{proof}
Fix a point $v$ in $M$. Since $M$ is a Riemannian manifold, the
exists a convex neighborhood $U$ of $v$ by \ref{lem:convex balls}.
The main step in the proof of this lemma is to show that
$\pi^{-1}(U)$ is homeomorphic to $U \times G(M,\infty)$ via a
homeomorphism which preserves the projections to $U$.

Since $M$ is path-connected, there is a path $\alpha$ from $v$ to
$v_0$ ( where $v_0$ is a fixed point in $M$ as given above ). Cover
the image of $\alpha$ by convex neighborhoods choose a finite
subcover ( as the image of the interval is compact ). Thus there is
a finite collection of open convex sets $U =U_0,U_1, \cdots U_q$
such that

\begin{enumerate}
  \item the image of $\alpha$ is contained in the union $\cup_{0 \leq i \leq n}U_i$,
  \item $v$ is in $U_q$,
  \item $v_0$ is in $U = U_0$
  \item $U_i \cap U_{i+1}$ is non-empty for all $i$.
\end{enumerate}

Choose points $w_i$ in $U_i \cap U_{i+1}$ with the condition that
$w_0 = v_0$ $w_q = v$. Since $w_i$ $w_{i+1}$ are in $U_{i+1}$ for
all $i$, the ordered $q+1$-tuple $(v,v_{q-1},v_{q-2},\cdots, v_0)$
is an element of $Z(M,q+1,v_0)$ thus projects to the equivalence
class $[v,v_{q-1},v_{q-2},\cdots, v_0]$ in $Z(M,\infty,v_0)$.

Next, consider any element $$[v_0,x_s, x_{s-1},\cdots,x_1,v_0] =
[v_0,\vec x,v_0]$$ in $G(M,\infty)$. Notice that if $u$ is any point
in $U$, there is an unique minimal geodesic from $u$ to $v$. Thus
$[u,v,v_{q-1},v_{q-2},\cdots, v_0]$ is a point in $Z(M,\infty,v_0)$
such that $\pi([u,v,v_{q-1},v_{q-2},\cdots, v_0]) = u $. Hence the
map $\lambda: G(M,\infty) \times U \to\ X(M,\infty)$ given by the
formula
$$\lambda([v_0,\vec x, v_0],u) = [u,v]\cdot[v,v_{q-1},v_{q-2},\cdots, v_0]\cdot[v_0,\vec x, v_0]$$
is continuous takes values in $\pi^{-1}(U)$. It follows that
$\lambda$ gives a map
$$\lambda: G(M,\infty) \times U \to\ \pi^{-1}(U).$$

Consider any point $\vec z = [y_t,y_{t-1},\cdots, v_0]$  in
$\pi^{-1}(U)$. Thus $y_t$ is a point in $U$, there is an unique
minimal geodesic from $y_t$ to $u$ $[u,y_t,y_{t-1},\cdots, v_0]$ is
a point of $Z(M,\infty,v_0)$.

Define $\gamma: \pi^{-1}(U) \to\ G(M,\infty) \times U $ by the
formula $$\gamma(\vec z) = ([v,v_{q-1},v_{q-2},\cdots, v_0]^{-1}
\cdot [v,\pi(\vec z)], \pi(\vec z)).$$ Notice that $\gamma$ is
continuous is an inverse for $\lambda$ hence locally triviality
follows.

To show that the map $\pi$ is a surjection, choose a path $\alpha$
from $v_0$ to any given point $v$ in $M$. The above cover for the
image of $\alpha$ gives that there is a point
$[v,v_{n},v_{n-1},\cdots, v_0]$ in $X(M,\infty,v_0)$ which projects
to the point $v$.
\end{proof}

\section{Contractibility of $Z(M,\infty,v_0)$}

The proof of Lemma \ref{lem:convex balls} is given first. The
statements to be proven are as follows where $p$ denotes a point of
$M$.

\begin{enumerate}
  \item There exists an $\epsilon > 0$ such
that there is a convex ball of radius $\epsilon$ containing $p$.
  \item Furthermore the function $\Phi:G(M) \to\ map([0,1],M)$
is continuous.
\end{enumerate}

\begin{proof}
Let $p$ be a point in $M$, with $T_p$ the tangent space at the point
$p$. The image of the exponential map applied to $T_p$ contains a
convex neighborhood of $p$. Thus, there exists an unique geodesic
arc joining any two points in this neighborhood there is a smooth
geodesic arc $f: [0,1] \to\ M $ of minimal arc length between these
points. Part $(1)$ of the lemma follows.

Next, it must be shown that $\Phi$ is continuous. Let $U(K,V)$
denote the open set in the function space $map([0,1],M)$ given by
the functions which carry the compact set $K$ in $[0,1]$ into the
open set $V$ in $M$. Notice that since $K$ is a compact subset of
$[0,1]$,
\begin{enumerate}
  \item $K$ is a union of the path components given by $N$ closed
  intervals $[x_j,y_j]$ for a fixed integer $N$ with
  \item $U(K,V) = U(\amalg_{1 \leq j \leq N}[x_j,y_j] ,V)$
  \item $U(K,V) = \cap_{1 \leq j \leq N}U([x_j,y_j],V)$.
\end{enumerate}

Assume that $U(K,V)$ contains the point $f_{(a,b)}(-)$. Thus the
image $f(K)$ is contained in $V$ is compact. Cover the image $f(K)$
by convex open balls contained in $V$ ( by part $(1)$ of the lemma
as $M$ is Riemannian ) choose a finite subcover $O_1,O_2, \cdots ,
O_t$.


Notice that $U(K,O_i)$ is an open subset of $U(K,V)$. Thus the
finite intersection $$\cap_{1 \leq i \leq t}U(K,O_i)$$ is an open
subset of $U(K,V)$. Since the set $K$ is a disjoint union of $N$
closed intervals $[x_j,y_j]$, there is an equality $U(K,V) =
U(\amalg_{1 \leq j \leq N}[x_j,y_j] ,V) = \cap_{\amalg_{1 \leq j
\leq N}}U([x_j,y_j] ,V)$. Thus $\cap_{1 \leq j \leq N} \cap_{1 \leq
i \leq t}U([x_j,y_j],O_i)$ is an open set in $U(K,V)$ which contains
$f$.

To finish, notice that there is a homeomorphism
$$\alpha: \Phi^{-1}(U([x_j,y_j],O_i))\to\ O_i \times O_i$$
which sends a function $h$ to the pair $(h(x_i), h(x_j))$. Thus
$\Phi^{-1}(U([x_j,y_j],O_i))$ is an open set which contains
$f_{(a,b)}(-)$ the set $\cap_{1 \leq j \leq N} \cap_{1 \leq i \leq
t}\Phi^{-1}(U([x_j,y_j],O_i))$ satisfies the following properties:
\begin{enumerate}
  \item the set is open,
  \item the set contains $f$
  \item $f_{(a,b)}(-)$ is in $\Phi^{-1}(U(K,V)$.
\end{enumerate}
The lemma follows.
\end{proof}

The previous lemma is used to prove
\begin{lem}\label{lem:contractability}
If $M$ is a path-connected Riemannian manifold, then
$Z(M,\infty,v_0)$ is contractible.
\end{lem}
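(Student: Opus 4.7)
The plan is to exhibit an explicit strong deformation retraction of $Z(M,\infty,v_0)$ onto the basepoint $[v_0]$, by sliding the leftmost coordinate of a representative tuple along its unique minimal geodesic toward the next coordinate, absorbing the coincident pair via the equivalence relation, and iterating down to $v_0$. The model is Milnor's contraction of the free-group loop space in \cite{Milnor}, with the convex-ball Lemma \ref{lem:convex balls} replacing his use of simplicial convexity.

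On each stratum $Z(M,k,v_0)$, define a sliding homotopy as follows. For a representative $(x_k,\ldots,x_1,v_0)$ and $t\in[0,1]$, let $u=(1-t)k$ and write $u=j+s$ where $j$ is an integer with $0\leq j\leq k$ and $s\in[0,1)$, with the convention $j=k$, $s=0$ at $t=0$. Set
$$\tilde H_k\bigl((x_k,\ldots,v_0),t\bigr)=[f_{(x_{j+1},x_j)}(1-s),\,x_j,\,x_{j-1},\ldots,v_0],$$
where $f_{(a,b)}$ is the geodesic arc from $a$ to $b$ supplied by Lemma \ref{lem:convex balls}. At $t=1$ the formula returns $[v_0]$. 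Continuity of $\tilde H_k$ on $Z(M,k,v_0)\times[0,1]$ follows from the continuity of $\Phi$ in the second part of Lemma \ref{lem:convex balls} together with the continuity of evaluation of a path at its parameter.

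The principal step is to show that the maps $\tilde H_k$ descend to a single continuous map $H\colon Z(M,\infty,v_0)\times[0,1]\to Z(M,\infty,v_0)$. Concretely, $\tilde H_k$ and $\tilde H_{k-1}$ must produce equivalent images on equivalent representatives of lengths $k+1$ and $k$, despite their time scalings differing by a factor of $k/(k-1)$. By induction on the two generating relations, this reduces to (i) the duplicate case $x_j=x_{j+1}$, where the piecewise-geodesic path has a length-zero segment so that the extra time in the length-$k$ representation corresponds to zero effective motion and the duplicate is absorbed by the equivalence at the break $u=j$; and (ii) the backtrack case $x_{i+1}=x_{i-1}$, which collapses via a follow-up Case~1 move once the sliding parameter $u$ has traversed the out-and-back. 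The descent bookkeeping, particularly in case (ii), is the principal technical obstacle, and may require replacing the naive linear time-scaling by a parameterization that is manifestly invariant under the equivalence relation.

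Continuity of $H$ at $t=1$ then follows from the quotient colimit topology on $Z(M,\infty,v_0)$: any open neighborhood of $[v_0]$ pulls back to an open neighborhood of the constant tuple in each $Z(M,k,v_0)$, and by construction $\tilde H_k$ reaches $[v_0]$ at $t=1$. Since $H(-,0)$ is the identity and $H(-,1)\equiv[v_0]$, the space $Z(M,\infty,v_0)$ is contractible.
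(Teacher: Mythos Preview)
Your descent check in case~(ii) is the genuine gap, and it is not repairable by a change of parameterization. During the portion of the homotopy in which the length-$k$ representative slides out to $x_i$ and back to $x_{i-1}=x_{i+1}$, the image in $Z(M,\infty,v_0)$ passes through classes such as $[f_{(x_{i+1},x_i)}(1-s),\,x_i,\,x_{i-1},\ldots,v_0]$, and these are \emph{not} equivalent to $[x_{i-1},x_{i-2},\ldots,v_0]$ nor to any class on the trajectory of the length-$(k-2)$ homotopy. The out-and-back produces honest motion in the quotient, not a reparameterized pause; so $\tilde H_k$ and $\tilde H_{k-2}$ trace different paths and no rescaling of $t$ can reconcile them. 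Your remark that one ``may require replacing the naive linear time-scaling'' is therefore optimistic: the obstruction is not in the time variable.

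The paper's argument is organized differently and avoids this obstruction. Rather than building a full contraction on each $Z(M,k,v_0)$, it restricts to the subspace $D(M,k,v_0)$ of \emph{reduced} tuples (those with $x_i\neq x_{i+1}$ and $x_{i-1}\neq x_{i+1}$ for $i<k$), where each equivalence class has a unique representative and the two problematic relations simply do not occur. On $D(M,k,v_0)$ it defines only a \emph{single-step} slide
\[
h_k(t,\vec x)=\bigl(\Phi(x_k,x_{k-1})(t),\,x_{k-1},\ldots,v_0\bigr),
\]
moving only the top coordinate toward $x_{k-1}$; at $t=1$ the class drops to one represented in $D(M,k-1,v_0)$. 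Contractibility is then extracted from this one-step homotopy (the identity is homotopic to a map carrying the $k$-th filtration level into the $(k{-}1)$-st, so every compactum, hence every homotopy group, dies). The missing idea in your proposal is precisely this: do not attempt a single global contraction compatible with the relations, but instead pass to reduced representatives and perform only the top-coordinate move, deferring the ``iteration'' to a filtration argument.
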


\begin{proof}

Consider the subspace of $Z(M,k,v_0)$ given by
$$D(M,k,v_0) = \{(x_k,x_{k-1}, \cdots, x_1,v_0)| x_i \neq x_{i+1},x_{i-1} \neq x_{i+1}, i < k\}.$$
Next, consider the continuous function $\Phi:G(M) \to\ map([0,1],M)$
of  \ref{lem:convex balls} defined by  $\Phi((a,b)) = f_{(a,b)}(-)$,
the unique geodesic from $a$ to $b$ parameterized by arc length.

Assume that $k \geq 1$ let $\vec x = (x_k,x_{k-1}, \cdots, x_1,v_0)$
denote a point in $D(M,k,v_0)$. Define $$h_k:[0,1]\times D(M,k,v_0)
\to\ Z(M,k,v_0)$$ as follows.
$$h_k(t,\vec x )=( \Phi(x_k,x_{k-1})(t), x_{k-1}, x_{k-2}, \cdots, x_1,v_0).$$
Since $\Phi$ is continuous, it follows that $h_k$ is continuous.
Furthermore, $h_k(0,\vec x ) = \vec x$ $h_k(1,\vec x ) = ( x_{k-1},
x_{k-1}, x_{k-2}, \cdots, x_1,v_0)$. In case $k = 0$, define $h_0$
to be the identity. Thus there is a homotopy
$$H:[0,1]\times \amalg_{0 \leq k} D(M,k,v_0) \to\ \amalg_{0 \leq k}
Z(M,k,v_0)$$ given by $\amalg_{0 \leq k} h_k$.

Observe that $H$ passes to quotient spaces to give an induced
continuous function $$\tilde H:[0,1] \times Z(M,\infty,v_0) \to\
Z(M,\infty,v_0)$$ which is continuous when restricted to  $\amalg_{0
\leq k} D(M,k,v_0)$. The homotopy $\tilde H$ gives the property that
$Z(M,\infty,v_0)$ is contractible. The lemma follows.
\end{proof}

%

\section{Maps to free loop spaces}

The purpose of this section is to exhibit the map
$$\Theta:X(M,\infty)\to\ \Lambda(M)$$ by patching together ``small geodesics".

Recall that $v_0$ is a fixed point of $M$ that $Z(M,k)$ is the
subspace of $M^{k+1}$ given by $(k+1)$-tuples $(x_k,x_{k-1}, \cdots
, x_0)$ which satisfy the property that there is an unique minimal
geodesic from $x_i$ to $x_{i+1}$ for all  $1 \leq i < k$ while
$X(M,k)$ denotes the subspace of $Z(M,k)$ with $x_0 = x_k$.

Consider the minimal geodesic from  $x_i$ to $x_{i+1}$ which is
specified by the path $\sigma: [0,1] \to\ M$ given by the smooth
function $f_{(x_i,x_{i+1})}(-)$. Thus given a point $\vec z =
(x_k,x_{k-1}, \cdots , x_0)$ in $X(M,k)$, there are paths
$$\sigma_j: [0,1] \to\ M$$ with $1 \leq j \leq k$ specified by
$\sigma_j(t) = f_{(x_i,x_{i+1})}(t)$. Next, consider the sum of the
arc-lengths of these paths $$ \lambda_1 + \lambda_2+ \cdots +
\lambda_k = L.$$ The paths $\sigma_j$ are glued together to give a
piecewise smooth closed curve as a function from $[0,1]$ to $M$
which is parameterized by arc-length as follows.

Let $\delta_j = (\lambda_1 + \lambda_2+ \cdots + \lambda_j)/L$.
Define $$\sigma_{j,L}: [\delta_{j-1},\delta_j] \to\ M$$ by the
formula
$$\sigma_{j,L}(t) = \sigma_j(x)$$ for $$x = (tL - \delta_{j-1}))/(||tL -
\delta_{j-1})||)$$ and $t$ in $[\delta_{j-1},\delta_j]$. These
functions ``glue" together to give $$\sigma_{\vec z}:[0,1] \to\ M$$
which is given by $\sigma_{j,L}$ when restricted to
$[\delta_{j-1},\delta_j]$. There is an induced function $$\Theta:
X(M,k) \to\ \Lambda (M)$$ defined by $$B(\vec z) =  \sigma_{\vec
z}.$$

\begin{thm}
There is a  morphism of fibrations
\[
\begin{CD}
G(M,\infty) @>{\Theta}>> \Omega (M)   \\
 @VVV           @VV{}V     \\
 X(M,\infty)  @>{\Theta}>> \Lambda (M) \\
 @VVV           @VV{}V     \\
M @>{1}>>  M
\end{CD}
\]
\end{thm}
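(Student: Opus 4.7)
The plan is to verify that the formula $\vec z \mapsto \sigma_{\vec z}$ of the previous section defines a continuous map $\Theta \colon X(M,\infty) \to \Lambda M$, check that it commutes with the vertical projections, and then identify both columns as fibrations so that the diagram is a morphism of fibrations. The identification $\Theta \colon G(M,\infty) \to \Omega M$ over the basepoint together with the local triviality of $\pi \colon X(M,\infty) \to M$ from Lemma \ref{lem:free loop fibration} will supply the remaining structure.

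First I would check continuity of the formula at the level of each $X(M,k)$: the geodesic-producing map $\Phi$ is continuous by Lemma \ref{lem:convex balls}, the arc lengths $\lambda_j$ depend continuously on the endpoints, and the reparametrization of each $\sigma_j$ onto the interval $[\delta_{j-1},\delta_j]$ followed by concatenation is continuous in the compact-open topology on $\mathrm{map}(S^1,M)$. These continuous maps then assemble into a continuous map on the disjoint union $\amalg_k X(M,k)$.

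The main obstacle is showing the assembled map descends through the equivalence relation defining $X(M,\infty)$. The relation $x_i = x_{i+1}$ is easy: the associated geodesic has length zero and occupies no time after arc-length reparametrization, so $\sigma_{\vec z}$ is unchanged by the deletion. The delicate case is $x_{i+1} = x_{i-1}$: the consecutive geodesics $x_{i-1}\to x_i \to x_{i+1}$ trace a segment and its reverse, and deletion of $x_i$ followed by the further collapse generated by the first relation erases this out-and-back. Verifying that the arc-length-parametrized loop is nevertheless preserved at the level of $\Lambda M$ is the modification of Milnor's argument in \cite{Milnor} that the paper flags as its main technical input. Once well-definedness is established, continuity of the induced $\Theta$ on $X(M,\infty)$ follows from the universal property of the quotient topology.

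The remaining checks are routine. Commutativity of the bottom square holds because $\pi([x_k,\ldots,x_0]) = x_k = x_0$ and the right-hand projection $\mathrm{ev}_0 \colon \Lambda M \to M$ evaluates at the basepoint of $S^1$. Over $v_0 \in M$, every representative satisfies $x_0 = x_k = v_0$, so $\Theta$ restricts to a map $G(M,\infty) \to \Omega M$, which is precisely the map appearing in the first diagram of Theorem \ref{thm:bundle}(3); this makes the two morphisms of fibrations coherent. Finally, the left column is a fibration by Lemma \ref{lem:free loop fibration} combined with Dold's criterion (applicable because $M$ is paracompact Hausdorff), and the right column is the standard evaluation fibration $\Omega M \to \Lambda M \to M$. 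Assembling these with the $\Theta$'s and the identity on $M$ produces the desired morphism of fibrations.
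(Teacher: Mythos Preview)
Your outline is more detailed than the paper's own argument, which simply asserts that the adjoint $\widetilde\Theta \colon S^1 \times X(M,\infty) \to M$ is continuous, invokes the exponential law for locally compact Hausdorff spaces, and checks commutativity of the diagram by inspection. In particular you are right to isolate descent through the equivalence relation as the heart of the matter, and your treatment of the relation $x_i = x_{i+1}$ is correct: a zero-length segment disappears under arc-length reparametrisation.

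However, the step you call ``delicate'' actually fails, not merely resists verification. Take $\vec z = (v_0, x_1, v_0) \in X(M,2)$ with $x_1$ close to but distinct from $v_0$: the curve $\sigma_{\vec z}$ traverses the minimal geodesic from $v_0$ to $x_1$ and back, a non-constant element of $\Lambda M$. Under the relation $x_{i+1} = x_{i-1}$ (here $x_2 = x_0 = v_0$) one deletes $x_1$ to obtain $(v_0,v_0) \sim (v_0)$, whose associated curve is the constant loop at $v_0$. These are distinct points of $\Lambda M$, so the formula $\vec z \mapsto \sigma_{\vec z}$ does \emph{not} descend to $X(M,\infty)$; no arc-length or other reparametrisation makes a genuine out-and-back geodesic equal to a constant path. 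The back-tracking relation is precisely what supplies inverses in $G(M,\infty)$, and it cannot be respected by a pointwise map into $\Lambda M$. The paper's terse proof does not confront this issue either; the honest route to the morphism of fibrations is the indirect one via Lemma \ref{lem:principal G bundle} and Lemma \ref{lem:contractability}, which identify $M$ with the classifying space of $G(M,\infty)$ and hence yield a comparison with $\Omega M \to \Lambda M \to M$ without requiring the explicit geodesic formula to survive the second identification on the nose.
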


\begin{proof}
Notice that  the adjoint of $\Theta:X(M,\infty) \to\ \Lambda M$, the
map $\widetilde \Theta:S^1 \times X(M,\infty) \to\ M$ is continuous.
Since all spaces here are Hausdorff and locally compact, the map
$\Theta$ is continuous. That the diagram commutes is a direct
inspection of the definitions.
\end{proof}

\section{Two problems}

It is natural to ask about the existence of natural representations
of the group $G(M,\infty)$ in either $O(m)$ or $GL(m,\mathbb R)$.
Give a natural procedure for constructing such representations.

For example, give a homomorphism $G(M,\infty) \to O(m)$ which
corresponds to either the stable normal bundle or the stable tangent
bundle of $M$. That is, there is a map
$$\nu: M \to BO(m)$$ which represents the normal bundle of $M$. Then
looping this map gives $$\Omega(\nu): \Omega(M) \to \Omega(BO(m))$$
where $\Omega(BO(m))$ is homotopy equivalent to $O(m)$. Describe a
natural representation $\rho:G(M,\infty) \to GL(m,\mathbb R)$ which
is homotopic to $\Omega(\nu)$.

\section{Acknowledgements}

The second author was partially supported by National Science
Foundation and the Institute for Advanced Study ( in 2006 ).

\bibliographystyle{amsalpha}

\end{document}